\def\RR{\mathbb{R}}
\def\SS{\mathbb{S}}
\def\ehat{\widehat{e}}
\def\bfn{\mathbf{n}}
\def\bfv{\mathbf{v}}
\def\bfx{\mathbf{x}}
\def\bfy{\mathbf{y}}
\def\sgn{{\rm sgn}}
\def\bfmu{{\boldsymbol \mu}}
\def\calI{\mathcal{I}}
\newtheorem{theorem}{Theorem}
\newenvironment{proof}{\begin{trivlist}\item[]{\emph{Proof.}}}
               {\hfill$\Box$\end{trivlist}}
\def\Acknowledgement{\goodbreak\bigskip\noindent{\bf Acknowledgement.\ }}
\begin{document}

\title{Transfinite mean value interpolation \\
       over polygons}
\author{Michael S. Floater\footnote{
   Department of Mathematics,
   University of Oslo, Moltke Moes vei 35, 0851 Oslo, Norway,
   {\it email: michaelf@math.uio.no}}
\and
Francesco Patrizi\footnote{
   SINTEF, PO Box 124 Blindern, 0314 Oslo, Norway,
   {\it email: francesco.patrizi@sintef.no}}
}

\maketitle

\abstract{
Mean value interpolation is a method for fitting a smooth function
to piecewise-linear data prescribed on the boundary of a polygon
of arbitrary shape, and has applications in
computer graphics and curve and surface modelling.
The method generalizes to transfinite interpolation, i.e.,
to any continuous data on the boundary but a mathematical proof
that interpolation always holds has so far been missing.
The purpose of this note is to complete this gap in the theory.
}

\smallskip

\noindent {\em Keywords: }
Mean value coordinates, mean value interpolation,
transfinite interpolation.

\section{Introduction}
One of the main uses of generalized barycentric coordinates (GBCs)
is to interpolate piecewise-linear data prescribed on the
boundary of a polygon with a smooth function.
This kind of barycentric interpolation has been used, for example,
in computer graphics, as the basis for
image warping, and in higher dimension, for mesh deformation.

One type of GBC that is frequently used for this is
mean value (MV) coordinates due to a simple closed formula.
MV coordinates have been studied extensively in
various papers~\cite{Floater:15a}
but while they are simple to implement,
a mathematical proof of interpolation seems surprisingly difficult.
A proof for convex polygons is relatively
simple and follows from the fact that MV coordinates
are positive in this case.
Interpolation for a convex polygon holds in fact for
any positive barycentric coordinates; see \cite{Floater:06}.
For arbitrary polygons, a specific proof of interpolation
for MV coordinates was derived in \cite{Hormann:06}.

The MV interpolant to piecewise-linear boundary data is based on
integration with respect to angles around each chosen point
inside the polygon.
This construction extends in a natural way to
any continuous boundary data thus providing
a transfinite interpolant~\cite{Ju:05a,Dyken:09}.
Such interpolation could have various applications, one of which is
its use as a building block for
interpolants of higher order that also match derivative
data on the boundary.
However, there is currently no mathematical proof of
interpolation in the transfinite setting in all cases,
only numerical evidence. Like in the piecewise-linear case,
when the polygon is convex, interpolation is easier to establish.
In fact it was shown in \cite{Dyken:09} for more general domains,
convex or otherwise, under the condition that the distance between the
external medial axis of the domain and the domain boundary
is strictly positive.
This latter condition trivially holds for convex domains
since there is no external medial axis in this case.

This still leaves open the question of whether MV interpolation
really interpolates any continuous data on the boundary of
an arbitrary polygon, and this is what we establish in this paper.
The proof parallels that of \cite{Hormann:06}
in that we treat interpolation at edge points and vertices
separately: in Theorems~\ref{thm:edge} and~\ref{thm:vertex}
respectively.
At the end of the paper we give two examples
that numerically confirm the interpolation property.

In the future we would like to extend the proof of interpolation
to 3D geometry such as volumes enclosed by
triangular meshes~\cite{Floater:05,Ju:05a}
but there does not seem to be any straightforward generalization
of the proof in the 2D case, not even for piecewise-linear boundary data.
It would also be interesting to establish transfinite interpolation
over more general domains with weaker conditions on the
shape of the boundary than those used in~\cite{Dyken:09}.

\section{Definitions}
Let $\Omega \subset \RR^2$ be a polygon with vertices $V$ and edges $E$.
Suppose that $f: \partial \Omega \to \RR$ is a continuous function
on the boundary $\partial \Omega$.
We define a function $g:\Omega \to \RR$
as follows.
For each edge $e \in E$, let $\bfn_e$ denote the
outward unit normal to $e$ with respect to $\Omega$,
and for each point
$\bfx \in \Omega$, let
$h_e(\bfx)$ be its signed distance to $e$,
$$ h_e(\bfx) = (\bfy - \bfx) \cdot \bfn_e, $$
for any $\bfy \in e$.
We let $\tau_e(\bfx) \in \{-1,0,1\}$ be the sign of the distance,
$$ \tau_e(\bfx) = \sgn(h_e(\bfx)). $$

Let $\SS_1$ denote the unit circle in $\RR^2$.
For $\bfx \in \Omega$, let
$\ehat(\bfx) \subset \SS_1$ denote the circular arc on $\SS_1$
formed by projecting $e$ onto the unit circle centred at $\bfx$,
$$ \ehat(\bfx) = \left\{\frac{\bfy - \bfx}{\|\bfy - \bfx\|}
                        : \bfy \in e \right\}, $$
with $\| \cdot \|$ the Euclidean norm.
This arc is just a point in the case that $\tau_e(\bfx)=0$.
Suppose $\tau_e(\bfx) \ne 0$.
Then for each unit vector $\bfmu \in \ehat(\bfx)$, let
$\bfy_e(\bfx,\bfmu)$ be the unique point of $e$ such that
$$ \frac{\bfy_e(\bfx,\bfmu) - \bfx}
        {\|\bfy_e(\bfx,\bfmu) - \bfx\|} = \bfmu, $$
and let
$$ I_e(\bfx) = 
   \int_{\ehat(\bfx)}
       \frac{1}
            {\|\bfy_e(\bfx,\bfmu) - \bfx\|} \, d \bfmu > 0, \qquad
   I_e(\bfx;f) = 
   \int_{\ehat(\bfx)}
       \frac{f(\bfy_e(\bfx,\bfmu))}
            {\|\bfy_e(\bfx,\bfmu) - \bfx\|} \, d \bfmu. $$
In the case that $\tau_e(\bfx)=0$,
we define $I_e(\bfx) =  I_e(\bfx;f) =  0$.

We now define
\begin{equation}\label{eq:mv}
 g(\bfx) = \calI f(\bfx) =
   \sum_{e \in E} \tau_e(\bfx) I_e(\bfx;f) \Big/ \phi(\bfx), 
\end{equation}
where
\begin{equation}\label{eq:phi}
 \phi(\bfx) = \sum_{e \in E} \tau_e(\bfx) I_e(\bfx).
\end{equation}
As shown in \cite{Floater:03a},
if $e = [\bfv_1,\bfv_2]$ then
\begin{equation}\label{eq:I1}
 I_e(\bfx) = \tan(\alpha_e(\bfx)/2)
                 \left(\frac{1}{\|\bfv_1-\bfx\|}
                   + \frac{1}{\|\bfv_2-\bfx\|} \right),
\end{equation}
where
$\alpha_e(\bfx) \in [0,\pi)$ is the angle at
$\bfx$ of the triangle $[\bfx,\bfv_1,\bfv_2]$.
It was shown in \cite{Hormann:06} that $\phi(\bfx) > 0$ for all
$\bfx \in \Omega$, and in the case that $f$ is linear,
$g$ interpolates $f$.

\section{Interpolation on an edge}\label{sec:edge}

\begin{theorem}\label{thm:edge}
Let $\bfy_*$ be an interior point
of some edge of $\partial \Omega$.
Then
$g(\bfx) \to f(\bfy_*)$ as $\bfx \to \bfy_*$ for
$\bfx \in \Omega$.
\end{theorem}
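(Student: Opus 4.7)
The plan is to use that $\calI$ reproduces constants, rewriting
\begin{equation*}
g(\bfx) - f(\bfy_*) = \frac{1}{\phi(\bfx)} \sum_{e \in E} \tau_e(\bfx)\, I_e\bigl(\bfx;\, f - f(\bfy_*)\bigr).
\end{equation*}
Let $e_*$ be the edge of $\partial\Omega$ containing $\bfy_*$. Since $\bfy_*$ is interior to $e_*$, the angle $\alpha_{e_*}(\bfx)$ tends to $\pi$ as $\bfx\to\bfy_*$, so by \eqref{eq:I1}, $I_{e_*}(\bfx)\to\infty$; moreover $\tau_{e_*}(\bfx) = +1$ for $\bfx \in \Omega$ close enough to $\bfy_*$. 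For every other edge $e \neq e_*$, the distances from $\bfx$ to both endpoints of $e$ stay bounded below and the angle $\alpha_e(\bfx)$ stays bounded away from $\pi$, so by \eqref{eq:I1} the quantities $I_e(\bfx)$ and $I_e(\bfx;f-f(\bfy_*))$ are uniformly bounded. Consequently $\phi(\bfx) = I_{e_*}(\bfx) + O(1) \to \infty$, and the contribution of the edges $e \neq e_*$ to the numerator, being $O(1)$, yields $o(1)$ after division by $\phi(\bfx)$.

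The core of the argument is to show $I_{e_*}(\bfx;\, f - f(\bfy_*))/\phi(\bfx) \to 0$. Given $\epsilon > 0$, by continuity of $f$ on $e_*$ choose $\delta > 0$ with $|f(\bfy)-f(\bfy_*)|<\epsilon$ for $\bfy \in e_*$ with $\|\bfy - \bfy_*\|<\delta$. I would split the integral $I_{e_*}(\bfx;\, f - f(\bfy_*))$ into two pieces according to whether the target point $\bfy_{e_*}(\bfx,\bfmu)$ lies in the disk of radius $\delta$ around $\bfy_*$ or not. On the near piece the integrand is bounded in modulus by $\epsilon / \|\bfy_{e_*}(\bfx,\bfmu)-\bfx\|$, yielding a contribution of absolute value at most $\epsilon\, I_{e_*}(\bfx)$. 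On the far piece, provided $\|\bfx-\bfy_*\|<\delta/2$, the triangle inequality gives $\|\bfy_{e_*}(\bfx,\bfmu)-\bfx\| \geq \delta/2$, so the integrand is bounded uniformly by $4\|f\|_\infty/\delta$, and integration over an arc of length at most $\pi$ yields a constant $C(\delta)$ independent of $\bfx$. Combining,
\begin{equation*}
\bigl| I_{e_*}(\bfx;\, f - f(\bfy_*)) \bigr| \leq \epsilon\, I_{e_*}(\bfx) + C(\delta).
\end{equation*}
Since $\phi(\bfx) \geq \tfrac12 I_{e_*}(\bfx)$ for $\bfx$ sufficiently close to $\bfy_*$ and $\phi(\bfx) \to \infty$, the right-hand side divided by $\phi(\bfx)$ is at most $2\epsilon + o(1)$. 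As $\epsilon$ was arbitrary, this term vanishes in the limit, which combined with the vanishing of the non-$e_*$ contributions gives $g(\bfx) \to f(\bfy_*)$.

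The main obstacle, as I see it, is the careful asymptotic analysis near $e_*$: verifying that the singularity of $I_{e_*}(\bfx)$ dominates $\phi(\bfx)$ regardless of the possibly negative signs $\tau_e(\bfx)$ coming from other edges in a non-convex polygon. This rests on the uniform boundedness of $I_e(\bfx)$ for all edges $e \neq e_*$ in a neighbourhood of $\bfy_*$, which is precisely where the hypothesis that $\bfy_*$ lies in the \emph{interior} of $e_*$ enters: it keeps $\bfx$ at positive distance from every vertex and from every other edge. At a vertex one loses the upper bound $\alpha_e(\bfx) < \pi$ for the two adjoining edges, which is why the vertex case of Theorem~\ref{thm:vertex} requires a separate argument.
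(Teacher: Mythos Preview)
Your proof is correct and follows essentially the same approach as the paper's: subtract $f(\bfy_*)$, split the contribution of the edge $e_*$ into a near part (within distance $\delta$ of $\bfy_*$, contributing at most $\epsilon$ times the blowing-up integral) and a far part (uniformly bounded), and use that $I_{e_*}(\bfx)\to\infty$ dominates $\phi(\bfx)$ while all other $I_e(\bfx)$ stay bounded. The only cosmetic difference is that the paper realizes the near/far split by introducing the sub-edge $e_0=[\bfy_1,\bfy_2]$ and treating the two far pieces of $e_*$ as new edges in a modified edge set $F$, then divides numerator and denominator by $I_{e_0}(\bfx)$ to get the limit $\epsilon$ exactly, whereas you split the integral directly and obtain $\limsup\le 2\epsilon$; both conclusions are equivalent since $\epsilon$ is arbitrary.
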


\begin{proof}
From the form of (\ref{eq:mv}),
$$ g(\bfx) - f(\bfy_*) =
   \sum_{e \in E} \tau_e(\bfx) I_e(\bfx;\tilde f) \Big/ \phi(\bfx), $$
where $\tilde f(\bfy) := f(\bfy) - f(\bfy_*)$ and therefore
\begin{equation}\label{eq:gfbound}
 |g(\bfx) - f(\bfy_*)| \le
   \sum_{e \in E} I_e(\bfx;|\tilde f|) \Big/ \phi(\bfx).
\end{equation}

Let $[\bfv_1,\bfv_2] \in E$ be the edge containing $\bfy_*$,
as in Figure~\ref{fig:edge}.
\begin{figure}[ht]
\centering
\includegraphics[width=0.40\textwidth]{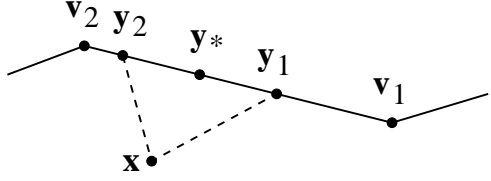}
\caption{Interpolation at an edge point $\bfy_*$.}
\label{fig:edge}
\end{figure}
Let $\epsilon > 0$.
By the continuity of $f$, there is some $\delta$, where
$$ 0 < \delta < \min\{\|\bfv_1-\bfy_*\|,\|\bfv_2-\bfy_*\|\}, $$
such that if $\bfy \in [\bfv_1,\bfv_2]$ and $\|\bfy - \bfy_*\| \le \delta$
then $|f(\bfy)-f(\bfy_*)| < \epsilon$.
Let $\bfy_j \in [\bfy_*,\bfv_j]$, $j=1,2$, be the point such that
$\|\bfy_j-\bfy_*\| = \delta$, and let
$e_0 = [\bfy_1,\bfy_2]$.
Then,
$$ \sum_{e \in E} I_e(\bfx;|\tilde f|)
 = I_{e_0}(\bfx;|\tilde f|)
  + \sum_{e \in F} I_e(\bfx;|\tilde f|),
$$
where
$$ F = \{[\bfv_1,\bfy_1],[\bfy_2,\bfv_2]\} 
  \cup (E \setminus [\bfv_1,\bfv_2]), $$
and it follows that
$|g(\bfx) - f(\bfy_*)| \le \gamma(\bfx) / \phi(\bfx)$, where
$$ \gamma(\bfx) = \epsilon I_{e_0}(\bfx)
   + 2 M \sum_{e \in F} I_e(\bfx), $$
and
\begin{equation}\label{eq:M}
 M := \sup_{\bfy \in \partial \Omega} |f(\bfy)|.
\end{equation}
Similar to $\gamma(\bfx)$, we can express $\phi(\bfx)$ as
$$ \phi(\bfx) = \tau_{e_0}(\bfx) I_{e_0}(\bfx)
   + \sum_{e \in F} \tau_e(\bfx) I_e(\bfx). $$
For $\bfx$ close enough to $\bfy_*$,
$\tau_{e_0}(\bfx) = 1$, and then
$$ \frac{\gamma(\bfx)}{\phi(\bfx)} =  \frac{\epsilon + 
    2 M \sum_{e \in F} I_e(\bfx) / I_{e_0}(\bfx)}
    {1 + \sum_{e \in F} \tau_e(\bfx) I_e(\bfx) 
    / I_{e_0}(\bfx)}. $$
As $\bfx \to \bfy_*$, $\alpha_{e_0}(\bfx) \to \pi$, and
since $\bfy_* \not \in e$ for all $e \in F$,
$$ \alpha_{e}(\bfx) \to \alpha_{e}(\bfy_*) < \pi, \quad e \in F. $$
Therefore, by (\ref{eq:I1}), as $\bfx \to \bfy_*$,
$$ I_{e_0}(\bfx) \to \infty \quad\hbox{and}\quad
 I_{e}(\bfx) \to I_{e}(\bfy_*) \ne \infty, \quad e \in F. $$
Thus $\gamma(\bfx)/\phi(\bfx) \to \epsilon$
as $\bfx \to \bfy_*$.
Hence,
$$ \limsup_{\bfx \to \bfy_*} |g(\bfx) - f(\bfy_*)| \le \epsilon $$
for any $\epsilon > 0$
which shows that $|g(\bfx) - f(\bfy_*)| \to 0$
as $\bfx \to \bfy_*$.
\end{proof}

\section{Interpolation at a vertex}\label{sec:vertex}

\begin{theorem}\label{thm:vertex}
For $\bfv \in V$, $g(\bfx) \to f(\bfv)$ as $\bfx \to \bfv$
for $\bfx \in \Omega$.
\end{theorem}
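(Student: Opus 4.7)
The plan is to run the same argument as in Theorem~\ref{thm:edge}, but to cut out a short sub-segment on each of the two edges meeting at $\bfv$. Let $e_+ = [\bfv, \bfv_1]$ and $e_- = [\bfv_{-1}, \bfv]$ be those two edges and let $\theta \in (0, 2\pi)$ be the interior angle of $\Omega$ at $\bfv$. Given $\epsilon > 0$, continuity of $f$ supplies a $\delta \in (0, \min(\|\bfv_1 - \bfv\|, \|\bfv_{-1} - \bfv\|))$ with $|f(\bfy) - f(\bfv)| < \epsilon$ whenever $\bfy \in \partial\Omega$ and $\|\bfy - \bfv\| \le \delta$. Put $\bfy_{\pm 1} \in e_\pm$ at distance $\delta$ from $\bfv$, and split the boundary into the two short sub-segments $s_+ = [\bfv, \bfy_1]$, $s_- = [\bfy_{-1}, \bfv]$, together with the family
$$F = \{[\bfy_1, \bfv_1], [\bfv_{-1}, \bfy_{-1}]\} \cup (E \setminus \{e_+, e_-\})$$
of sub-edges bounded away from $\bfv$.

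Repeating the derivation from the edge proof yields $|g(\bfx) - f(\bfv)| \le \gamma(\bfx)/\phi(\bfx)$ with
$$\gamma(\bfx) = \epsilon\bigl(I_{s_+}(\bfx) + I_{s_-}(\bfx)\bigr) + 2M \sum_{e' \in F} I_{e'}(\bfx).$$
For $\bfx \in \Omega$ sufficiently close to $\bfv$, $\bfx$ lies on the interior side of both lines through $e_\pm$, whence $\tau_{s_+}(\bfx) = \tau_{s_-}(\bfx) = 1$ and
$$\phi(\bfx) = I_{s_+}(\bfx) + I_{s_-}(\bfx) + \sum_{e' \in F} \tau_{e'}(\bfx) I_{e'}(\bfx).$$
Since $\bfv \notin e'$ for every $e' \in F$, formula \eqref{eq:I1} shows that each $I_{e'}(\bfx)$ has a finite limit as $\bfx \to \bfv$. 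It therefore suffices to prove that $I_{s_+}(\bfx) + I_{s_-}(\bfx) \to \infty$; dividing numerator and denominator of $\gamma/\phi$ by this diverging quantity then gives $\gamma(\bfx)/\phi(\bfx) \to \epsilon$, and letting $\epsilon \to 0$ finishes the argument.

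The main obstacle, and the only essential departure from the edge case, is this joint blow-up. By \eqref{eq:I1}, each summand dominates $\tan(\alpha_{s_\pm}(\bfx)/2)/\|\bfv - \bfx\|$, with $1/\|\bfv - \bfx\| \to \infty$; but the individual $\alpha_{s_\pm}(\bfx)$ can collapse to zero along particular approach directions (for instance along the extension of $e_+$ past $\bfv$, a direction that lies inside $\Omega$ when $\bfv$ is reflex), so the two tangents must be controlled jointly. The key observation is that $\widehat{s}_+(\bfx)$ and $\widehat{s}_-(\bfx) \subset \SS_1$ are the minor arcs joining the direction of $\bfv$ from $\bfx$ to the directions of $\bfy_1$ and $\bfy_{-1}$ from $\bfx$ respectively, so the triangle inequality on $\SS_1$ gives
$$\alpha_{s_+}(\bfx) + \alpha_{s_-}(\bfx) \ge \beta(\bfx),$$
where $\beta(\bfx)$ is the angular distance between the directions of $\bfy_1$ and $\bfy_{-1}$ as seen from $\bfx$. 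As $\bfx \to \bfv$, $\beta(\bfx) \to \min(\theta, 2\pi - \theta) > 0$, so $\alpha_{s_+}(\bfx) + \alpha_{s_-}(\bfx)$ stays bounded below by a positive constant $c$ near $\bfv$. Hence at least one of $\alpha_{s_\pm}(\bfx)$ exceeds $c/2$, giving $\tan(\alpha_{s_+}(\bfx)/2) + \tan(\alpha_{s_-}(\bfx)/2) \ge \tan(c/4) > 0$, so that $I_{s_+}(\bfx) + I_{s_-}(\bfx) \ge \tan(c/4)/\|\bfv - \bfx\| \to \infty$, as required.
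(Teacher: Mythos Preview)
Your argument is correct for convex vertices but has a genuine gap at reflex (concave) vertices. The claim ``for $\bfx \in \Omega$ sufficiently close to $\bfv$, $\bfx$ lies on the interior side of both lines through $e_\pm$, whence $\tau_{s_+}(\bfx) = \tau_{s_-}(\bfx) = 1$'' is false when the interior angle $\theta$ at $\bfv$ exceeds $\pi$. In that case the extensions of $e_+$ and $e_-$ past $\bfv$ run into $\Omega$ and split any neighbourhood of $\bfv$ in $\Omega$ into three sectors; in two of these sectors exactly one of $\tau_{s_+}(\bfx),\tau_{s_-}(\bfx)$ equals $-1$. You even allude to this geometry later (``the extension of $e_+$ past $\bfv$ \ldots lies inside $\Omega$ when $\bfv$ is reflex''), but the earlier step already used the incorrect sign claim.

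This matters because $\phi(\bfx)$ contains the \emph{signed} combination $\tau_{s_+}(\bfx) I_{s_+}(\bfx) + \tau_{s_-}(\bfx) I_{s_-}(\bfx)$, not the unsigned sum. Your blow-up argument correctly shows $I_{s_+}(\bfx)+I_{s_-}(\bfx)\to\infty$, but when the signs differ the denominator is a difference of two diverging quantities plus a bounded remainder, and dividing through by $I_{s_+}+I_{s_-}$ no longer forces $\gamma/\phi\to\epsilon$. The paper confronts exactly this point: it multiplies numerator and denominator by $\|\bfv-\bfx\|$, rewrites the signed tangent sum via $\tan(\tau_1\alpha_1/2)+\tan(\tau_2\alpha_2/2)=\sin((\tau_1\alpha_1+\tau_2\alpha_2)/2)/(\cos(\alpha_1/2)\cos(\alpha_2/2))$, and then invokes the geometric identity $\tau_1(\bfx)\alpha_1(\bfx)+\tau_2(\bfx)\alpha_2(\bfx)=\alpha_{[\bfy_1,\bfy_2]}(\bfx)$ (valid near a reflex $\bfv$ in all three sector configurations) to see that the denominator tends to the positive constant $\sin(\alpha_{[\bfv_1,\bfv_2]}(\bfv)/2)$. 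That identity, or something equivalent controlling the signed combination from below, is the missing ingredient in your proof.
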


\begin{proof}
Similar to (\ref{eq:gfbound}), from the form of (\ref{eq:mv}),
$$ |g(\bfx) - f(\bfv)| \le
   \sum_{e \in E} I_e(\bfx;|\tilde f|) \Big/ \phi(\bfx), $$
where $\tilde f(\bfy) := f(\bfy) - f(\bfv)$.

Let $\bfv_1$ and $\bfv_2$ be the two neighbouring vertices of
$\bfv$ with $\bfv_1,\bfv,\bfv_2$ ordered
anticlockwise w.r.t.\ $\partial \Omega$
as in Figures~\ref{fig:convex} and~\ref{fig:concave}.
\begin{figure}[ht]
\centering
\includegraphics[width=0.32\textwidth]{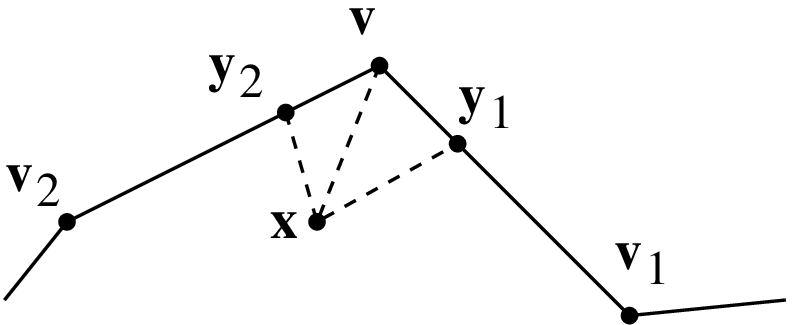}
\caption{Interpolation at a convex vertex $\bfv$.}
\label{fig:convex}
\end{figure}
\begin{figure}[ht]
\centering
\includegraphics[width=0.28\textwidth]{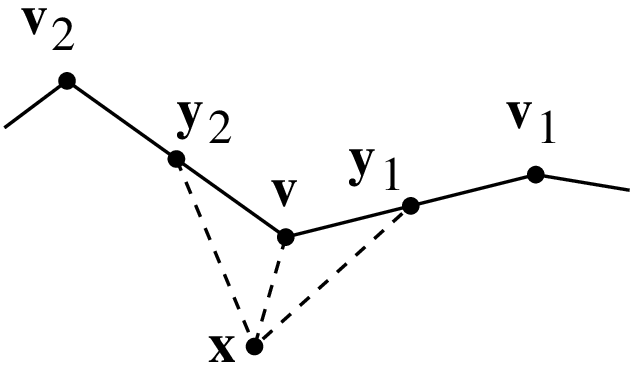}
\quad
\includegraphics[width=0.28\textwidth]{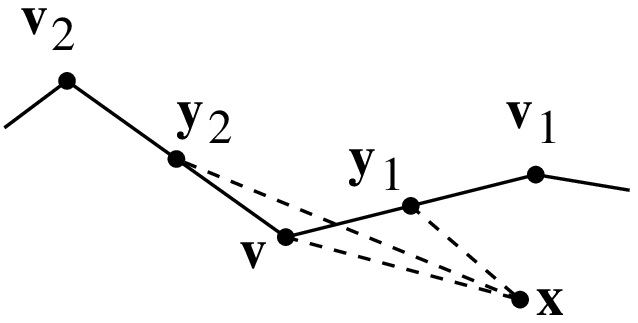}
\quad
\includegraphics[width=0.28\textwidth]{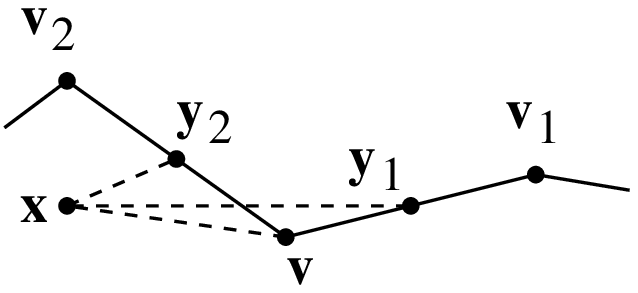}
\caption{Interpolation at a concave vertex $\bfv$.}
\label{fig:concave}
\end{figure}
Let $\epsilon > 0$.
By the continuity of $f$, there is some $\delta$, where
$$ 0 < \delta < \min\{\|\bfv_1-\bfv\|,\|\bfv_2-\bfv\|\}, $$
such that if $\bfy$ is in $[\bfv_1,\bfv]$ or $[\bfv,\bfv_2]$
and $\|\bfy - \bfv\| \le \delta$
then $|f(\bfy)-f(\bfv)| < \epsilon$.
Let $\bfy_j \in [\bfv,\bfv_j]$, $j=1,2$, be the point such that
$\|\bfy_j-\bfv\| = \delta$, and define
$e_1 = [\bfy_1,\bfv]$ and $e_2 = [\bfv,\bfy_2]$.
Then,
$$ \sum_{e \in E} I_e(\bfx;|\tilde f|)
  = I_{e_1}(\bfx;|\tilde f|) + I_{e_2}(\bfx;|\tilde f|)
  + \sum_{e \in F} I_e(\bfx;|\tilde f|),
$$
where
$$ F = \{[\bfv_1,\bfy_1],[\bfy_2,\bfv_2]\}
       \cup (E \setminus \{[\bfv_1,\bfv],[\bfv,\bfv_2]\}). $$
It follows that
$|g(\bfx) - f(\bfy_*)| \le \gamma(\bfx) /\phi(\bfx)$, where
$$ \gamma(\bfx) = 
     \epsilon (I_{e_1}(\bfx) + I_{e_2}(\bfx))
   + 2 M \sum_{e \in F} I_e(\bfx), $$
and $M$ is as in (\ref{eq:M}).
We can similarly express $\phi(\bfx)$ as
$$ \phi(\bfx) =
    \tau_{e_1}(\bfx) I_{e_1}(\bfx) + \tau_{e_2}(\bfx) I_{e_2}(\bfx)
    + \sum_{e \in F} \tau_e(\bfx) I_e(\bfx). $$
Then using (\ref{eq:I1}),
and multiplying both $\gamma(\bfx)$ and $\phi(\bfx)$ by
$\|\bfv - \bfx\|$, we have
$$
 \frac{\gamma(\bfx)}{\phi(\bfx)} = 
 \frac{\epsilon(
   \tan(\alpha_{e_1}(\bfx)/2) + \tan(\alpha_{e_2}(\bfx)/2)) + A(\bfx)}
      {\tau_{e_1}(\bfx) \tan(\alpha_{e_1}(\bfx)/2)
   + \tau_{e_2}(\bfx) \tan(\alpha_{e_2}(\bfx)/2) + B(\bfx)}, $$
where $A(\bfx), B(\bfx) \to 0$ as $\bfx \to \bfv$.
Letting $\tau_j = \tau_{e_j}$ and
$\alpha_j = \alpha_{e_j}$, $j=1,2$, and
using the fact that $-\tan(\beta) = \tan(-\beta)$ for $\beta \in \RR$,
we can rewrite this as
$$
 \frac{\gamma(\bfx)}{\phi(\bfx)} = 
 \frac{\epsilon(
   \tan(\alpha_1(\bfx)/2) + \tan(\alpha_2(\bfx)/2)) + A(\bfx)}
      {\tan(\tau_1(\bfx) \alpha_1(\bfx)/2)
   + \tan(\tau_2(\bfx)\alpha_2(\bfx)/2) + B(\bfx)}. $$
Next, using the identity
$$ \tan(\beta_1) + \tan(\beta_2) 
   = \frac{\sin(\beta_1+\beta_2)} {\cos(\beta_1)\cos(\beta_2)}, $$
and the fact that $\cos(-\beta) = \cos(\beta)$, it follows that
$$ \frac{\gamma(\bfx)}{\phi(\bfx)} = 
 \frac{\epsilon \sin((\alpha_1(\bfx) 
   + \alpha_2(\bfx))/2) + \tilde A(\bfx)}
 {\sin((\tau_1(\bfx)\alpha_1(\bfx) 
  + \tau_2(\bfx)\alpha_2(\bfx))/2) + \tilde B(\bfx)}, $$
where
\begin{align*}
 \tilde A(\bfx) & = \cos((\alpha_1(\bfx)/2) 
                    \cos((\alpha_2(\bfx)/2) A(\bfx), \cr
 \tilde B(\bfx) & = \cos((\alpha_1(\bfx)/2) 
                    \cos((\alpha_2(\bfx)/2) B(\bfx),
\end{align*}
and so also $\tilde A(\bfx), \tilde B(\bfx) \to 0$ as $\bfx \to \bfv$.

Finally, we consider the two cases (i) $\bfv$ is a convex vertex and
(ii) $\bfv$ is a concave vertex.
In case (i), referring to Figure~\ref{fig:convex} we see that
for $\bfx$ close enough to $\bfv$,
$\tau_1(\bfx) = \tau_2(\bfx) = 1$
and so
\begin{equation}\label{eq:limconvex}
 \lim_{\bfx \to \bfv} \frac{\gamma(\bfx)}{\phi(\bfx)} = 
   \epsilon.
\end{equation}
In case (ii), the values of $\tau_1(\bfx)$ and $\tau_2(\bfx)$
depend on the location of $\bfx$, even when $\bfx$ is close to $\bfv$.
However, for any $\bfx$ that is close enough to $\bfv$,
we have the identity (observed in \cite{Hormann:06})
$$
 \tau_1(\bfx)\alpha_1(\bfx) 
  + \tau_2(\bfx)\alpha_2(\bfx) 
    = \alpha_{[\bfy_1,\bfy_2]}(\bfx).
$$
This can be verified in the three cases
illustrated in Figure~\ref{fig:concave}.
In the three configurations, from left to right, we have, respectively,
$$
   \alpha_{[\bfy_1,\bfy_2]}(\bfx) =
   \begin{cases} \alpha_1(\bfx) + \alpha_2(\bfx), \cr 
                 \alpha_1(\bfx) - \alpha_2(\bfx), \cr 
               - \alpha_1(\bfx) + \alpha_2(\bfx).
   \end{cases}
$$
Thus,
$$
 \lim_{\bfx \to \bfv}
  (\tau_1(\bfx)\alpha_1(\bfx) 
  + \tau_2(\bfx)\alpha_2(\bfx))
    = \alpha_{[\bfy_1,\bfy_2]}(\bfv)
    = \alpha_{[\bfv_1,\bfv_2]}(\bfv) \in (0,\pi).
$$
Since
$\sin((\alpha_1(\bfx) + \alpha_2(\bfx))/2) \le 1$,
it follows that in case (ii),
\begin{equation}\label{eq:limconcave}
 \limsup_{\bfx \to \bfv} \frac{\gamma(\bfx)}{\phi(\bfx)}
 \le \frac{\epsilon}{\sin(\alpha_{[\bfv_1,\bfv_2]}(\bfv)/2)}.
\end{equation}
From (\ref{eq:limconvex}) and (\ref{eq:limconcave}) we deduce that
for any type of vertex $\bfv$,
$|g(\bfx) - f(\bfv)| \to 0$ as $\bfx \to \bfv$.
\end{proof}

\section{Numerical examples}

In this section we present two examples of transfinite mean value interpolants
of different functions $f(x,y)$
over a polygonal-shaped domain in order to confirm
the theoretical interpolation property proven in
Sections~\ref{sec:edge} and~\ref{sec:vertex}.
For the implementation we have
evaluated the mean value interpolant $g(x,y)$ using the
boundary integral formula of \cite{Dyken:09}.
This is more efficient than applying
the definition, equation \eqref{eq:mv}, which would
require computing intersection points.

The first function we consider is
$$ f(x,y) = x^2-y^2 $$ 
defined on the non-convex polygon in Figure \ref{ex1}a.
Figures \ref{ex1}a and \ref{ex1}b illustrate the exact surface and
Figures \ref{ex1}c and \ref{ex1}d
the corresponding interpolant $g(x,y)$.
Figure \ref{ex1}e shows the absolute error $|f(x,y)-g(x,y)|$.
The darker the colour the smaller the error and, as expected,
the error vanishes as we get close to the boundary.
\begin{figure}[ht]
\centering
\subfloat[]{
\includegraphics[width=5cm]{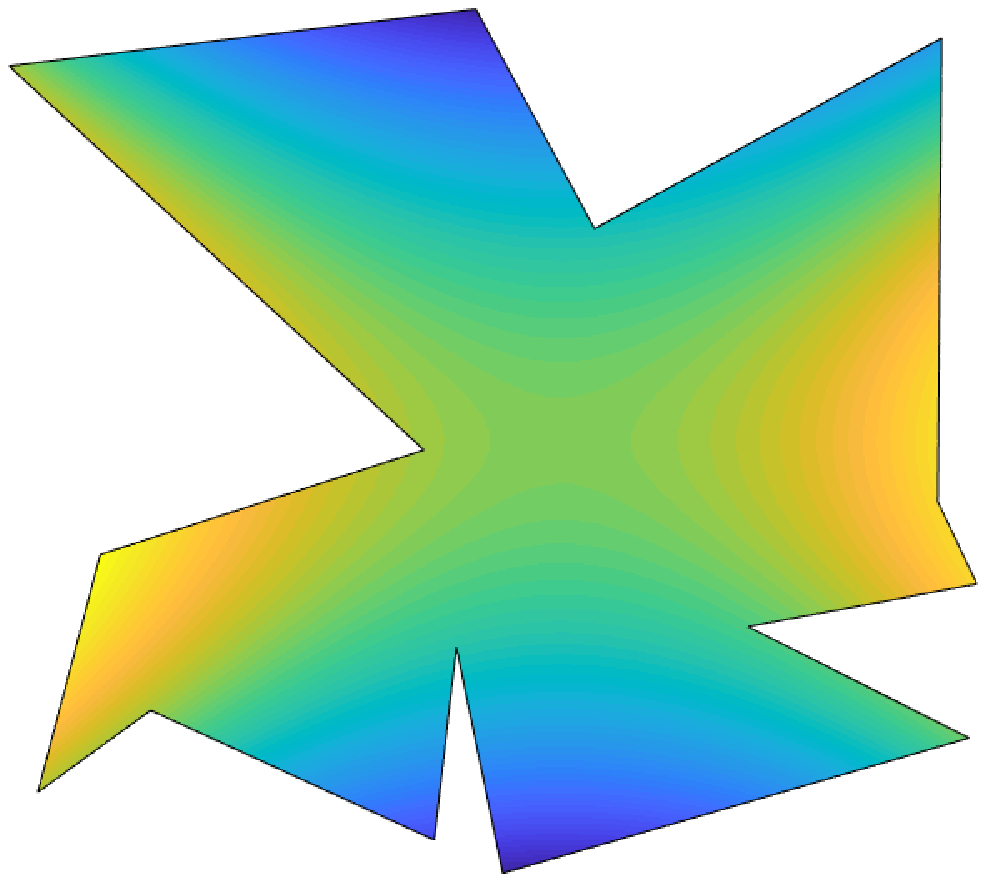}}\quad
\subfloat[]{\includegraphics[width=5cm]{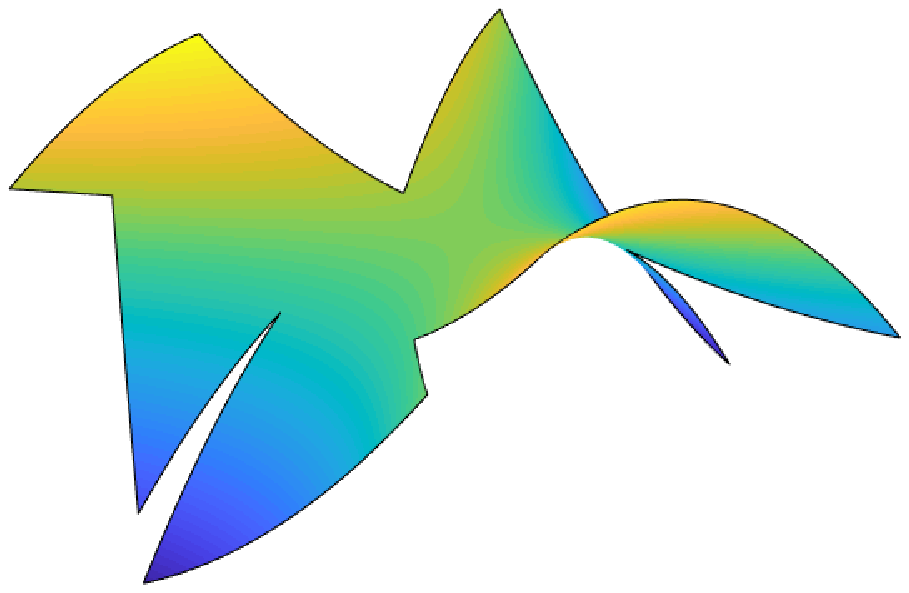}}\\
\subfloat[]{
\includegraphics[width=5cm]{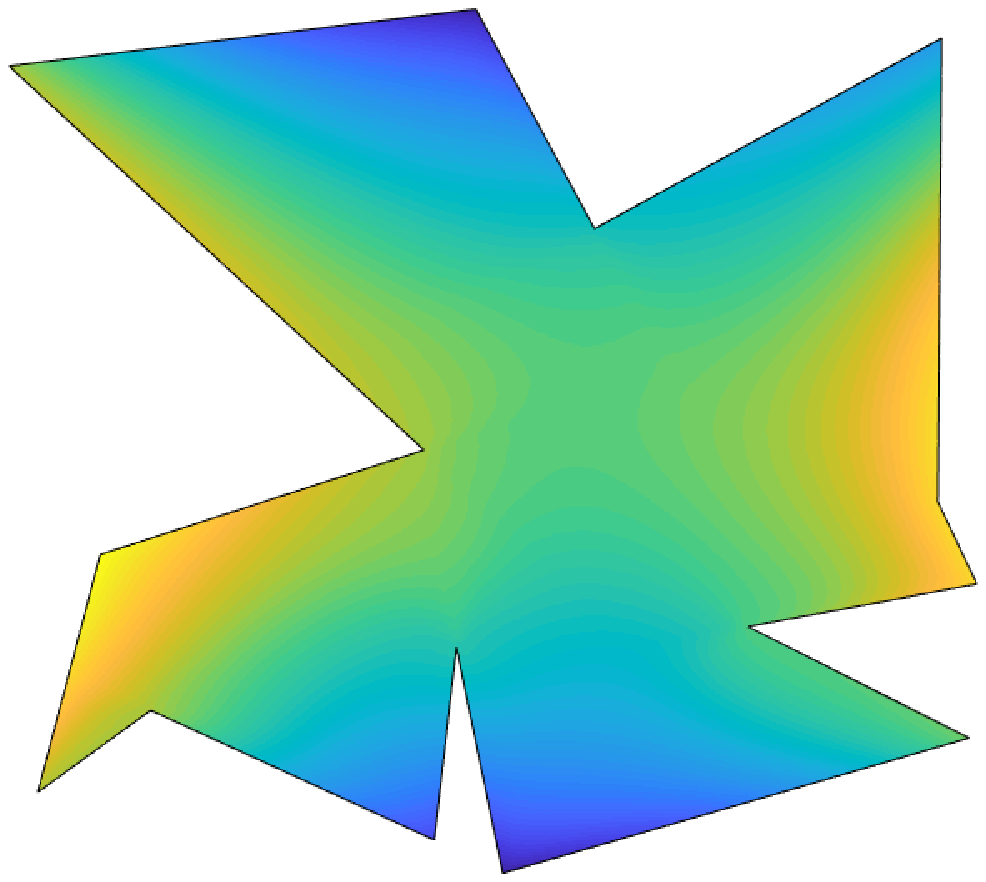}}\quad
\subfloat[]{\includegraphics[width=5cm]{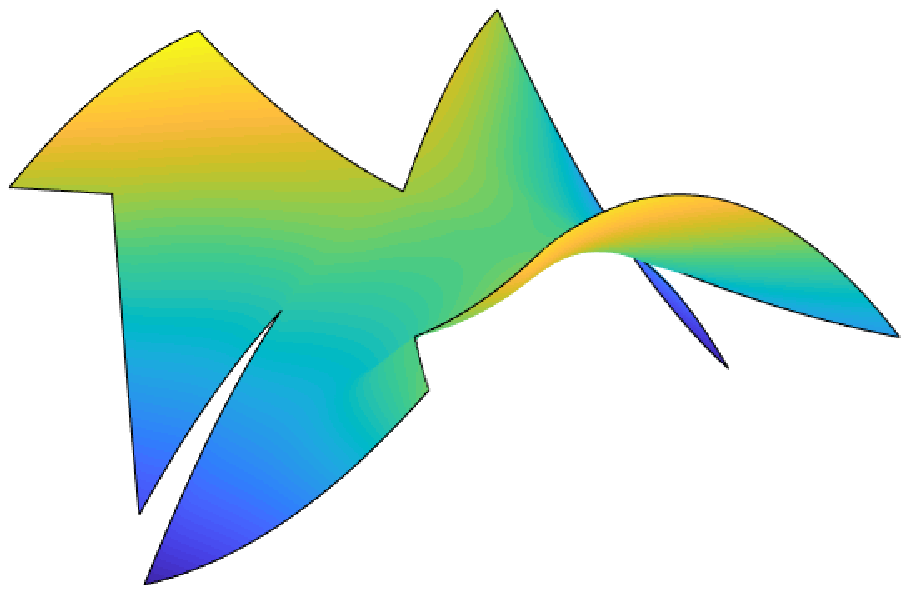}}\\
\subfloat[]{
\includegraphics[width=5cm]{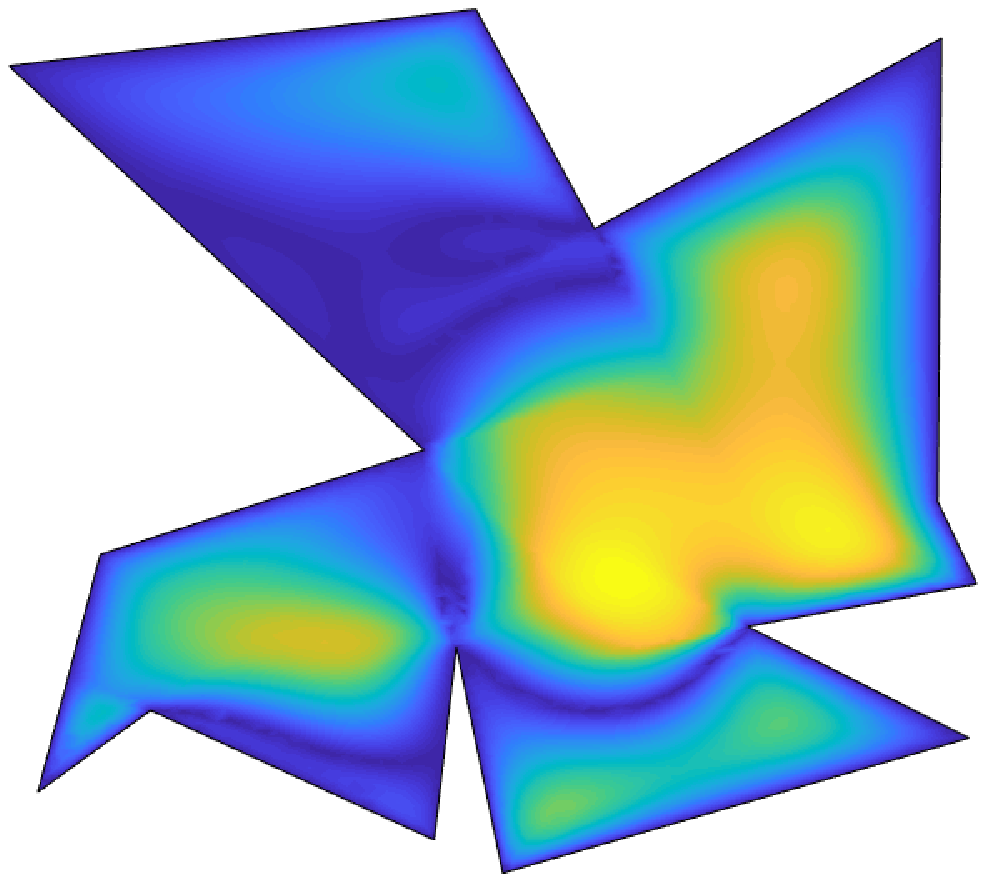}}
\caption{}\label{ex1}
\end{figure}\\
For our second example we chose the function
$$ f(x,y) = \frac{1}{9}[\tanh(9x-9y) +1]. $$
Figures \ref{ex2}a and \ref{ex2}b and
Figures \ref{ex2}c and \ref{ex2}d show the exact surface and the
interpolant, respectively, while Figure \ref{ex2}e shows the 
absolute error. 
\begin{figure}[ht]
\centering
\subfloat[]{
\includegraphics[width=5cm]{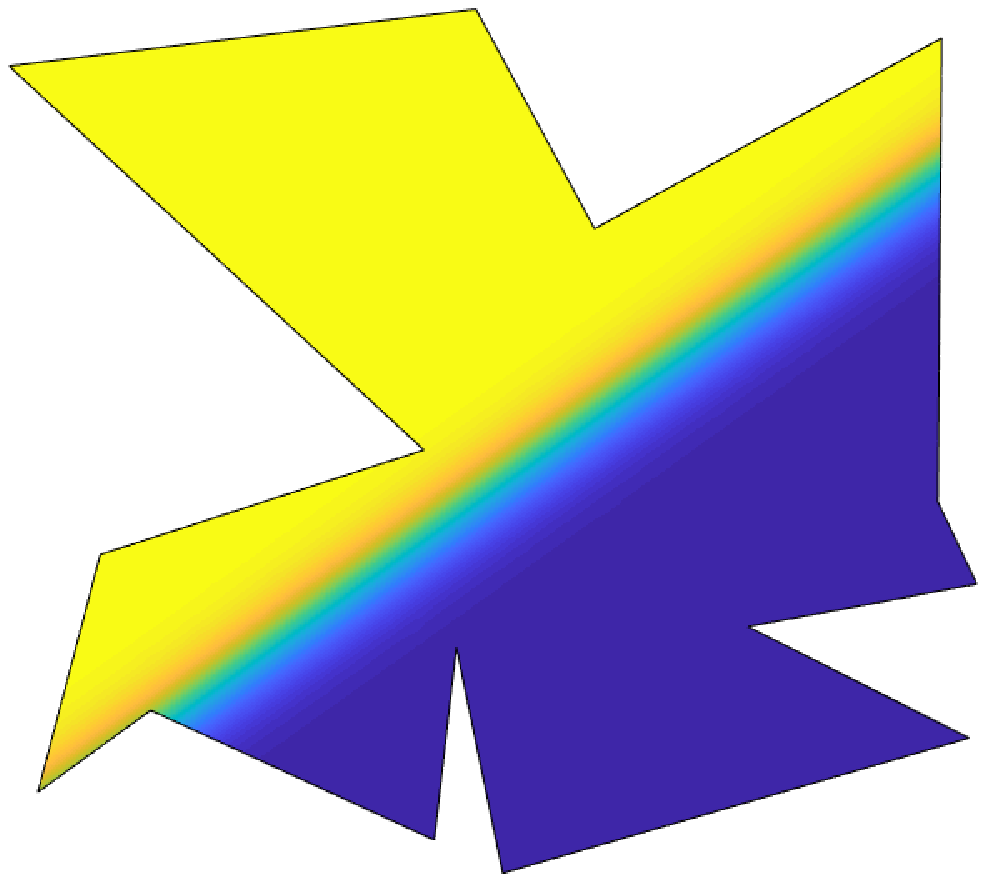}}\quad
\subfloat[]{\includegraphics[width=5cm]{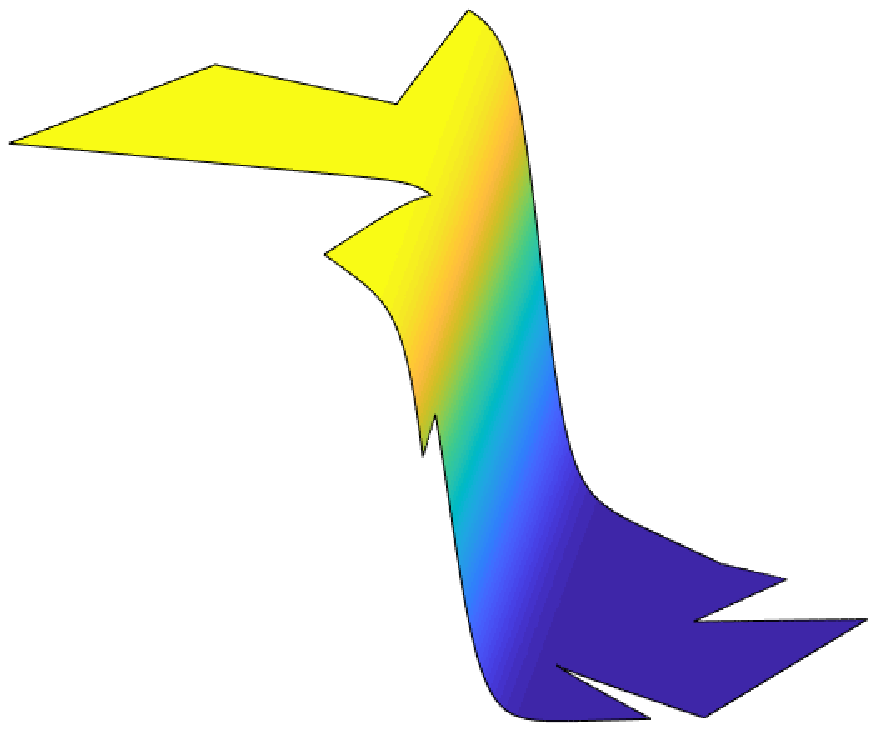}}\\
\subfloat[]{
\includegraphics[width=5cm]{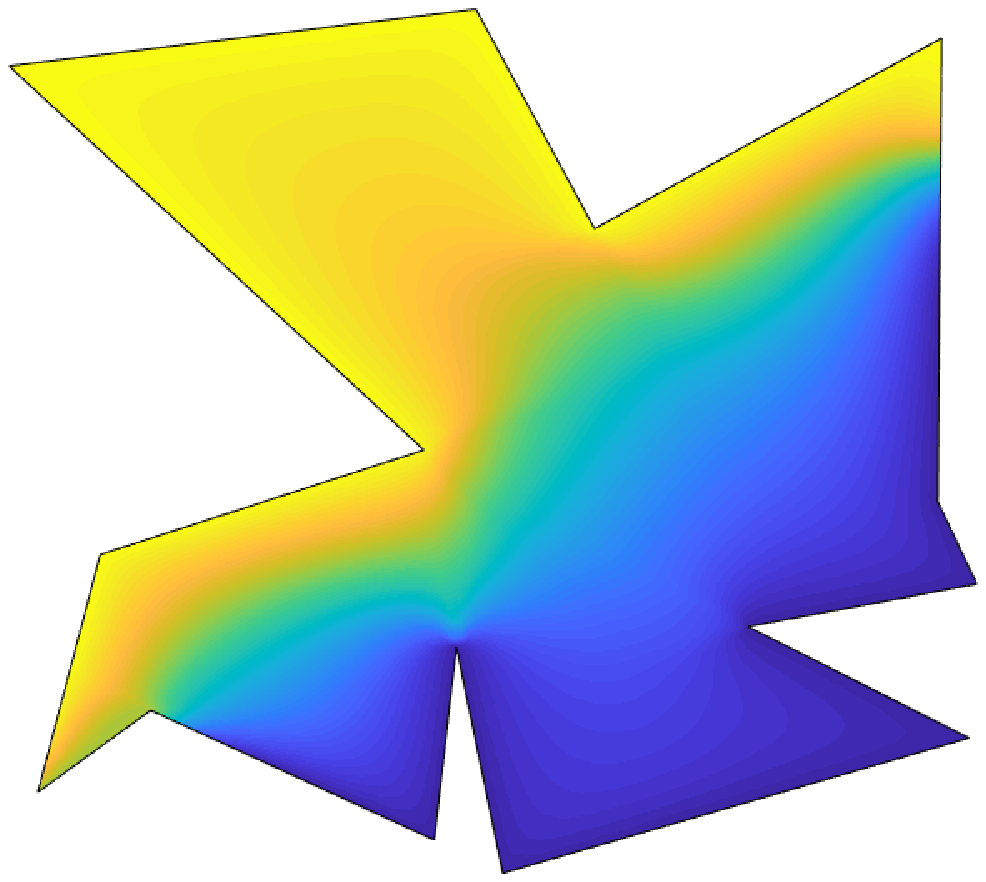}}\quad
\subfloat[]{\includegraphics[width=5cm]{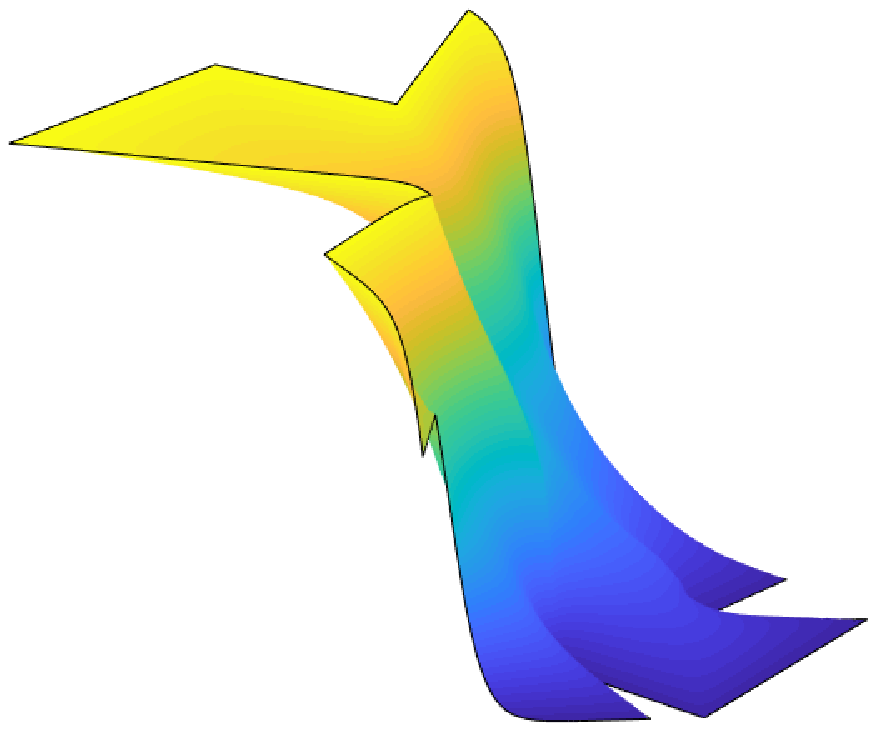}}\\
\subfloat[]{
\includegraphics[width=5cm]{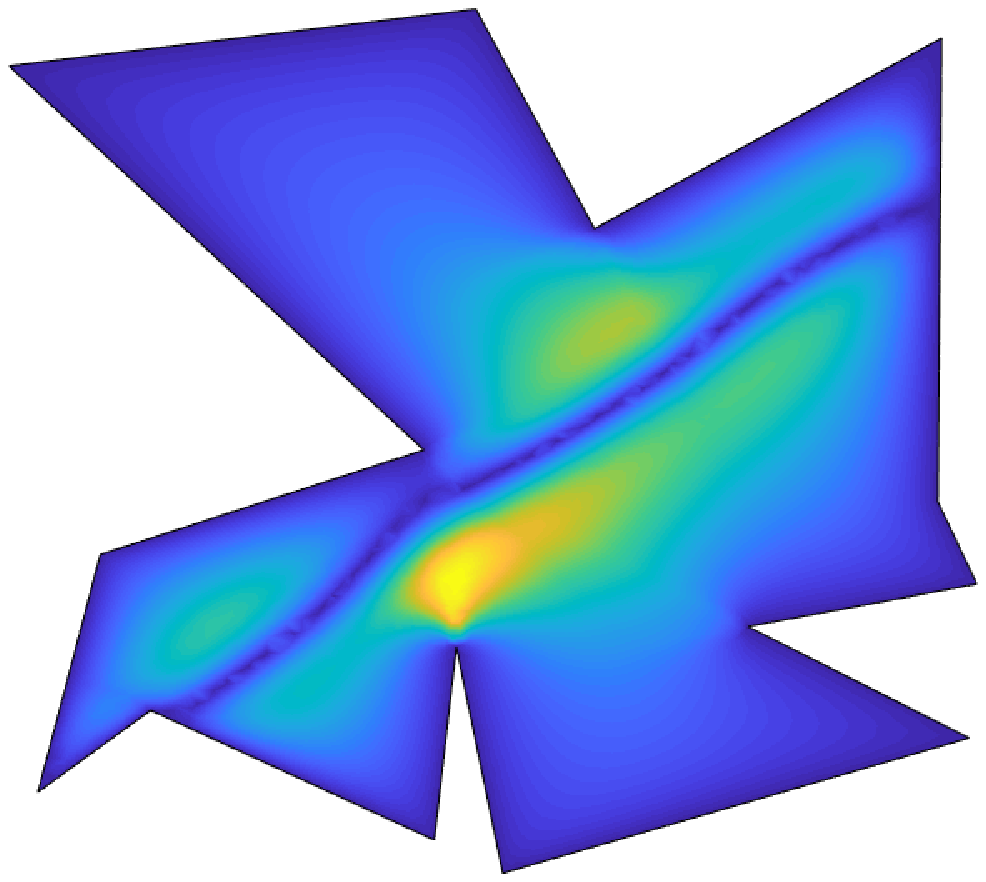}}
\caption{}\label{ex2}
\end{figure}

\Acknowledgement{
This project has received funding from the European Union's
Horizon 2020 research and innovation programme under the
Marie Sk{\l}odowska-Curie grant agreement No~675789.}

\bibliography{coordinates}

\end{document}